\algnewcommand{\IfThenElse}[3]{
  \State \algorithmicif\ #1\ \algorithmicthen\ #2\ \algorithmicelse\ #3}
\DeclareMathOperator*{\argmax}{arg\,max}
\newtheorem{myprop}{Proposition}
\newtheorem{mydef}[myprop]{Definition}
\newtheorem{myex}[myprop]{Example}
\DeclareMathOperator*{\argmin}{arg\,min}
\newcommand{\IE}{\mbox{i.e.}\xspace}
\def\D_All{\mathcal{D}}
\newcolumntype{R}[1]{>{\raggedleft\arraybackslash}p{#1}}
\newcommand{\USet}{\mathcal{L}_\forall}
\begin{document}

\title{Adaptive Relaxations  for Multistage Robust Optimization  \thanks{This research was partially funded by the Deutsche Forschungsgemeinschaft (DFG, German Research Foundation) - 399489083.}
}

\titlerunning{Adaptive Relaxations for Multistage Robust Optimization}
%
\author{Michael Hartisch}
\authorrunning{M. Hartisch }
%
\institute{University of Siegen, 57072 Siegen, Germany \\
\email{michael.hartisch@uni-siegen.de }}
\maketitle              
\begin{abstract}
Multistage robust optimization problems can be interpreted as two-person zero-sum games between two players. We exploit this game-like nature and utilize a game tree search in order to solve quantified integer programs (QIPs). In this algorithmic environment relaxations are repeatedly called to asses the quality of a branching variable and for the generation of bounds. A useful relaxation, however, must be well balanced with regard to its quality and its computing time. We present two relaxations that incorporate scenarios from the uncertainty set, whereby the considered set of scenarios is continuously adapted according to the latest information gathered during the search process.  Using selection, assignment, and runway scheduling problems as a testbed, we show the impact of our findings.

\keywords{multistage robust optimization  \and game tree search  \and relaxations \and quantified integer programming.}
\end{abstract}
\section{Introduction}
Most aspects of decision making are highly affected by uncertainty. In order to take such uncertainty into account different methodologies have been developed, such as stochastic programming \cite{KM05} or robust optimization \cite{Ben-Tal}. In this setting, multistage models can be used to obtain an even more realistic description of the underlying problem. While there are several real multistage stochastic approaches (e.g. \cite{mercier2007performance,hemmi2018recursive}), extensions to robust optimization with more than two stages only recently gained more attention (e.g. \cite{delage2015robust,bertsimas2016multistage}). Due to their PSPACE-complete nature \cite{Papadimitriou}, tackling multistage robust problems is a very complicated task and for the human mind even comprehending a solution is rather challenging. Solution approaches 
 include approximation techniques \cite{bertsimas2015design}, dynamic programming \cite{shapiro2011dynamic}, and solving the deterministic equivalent problem (DEP), also referred to as robust counterpart \cite{Ben-Tal}, often using decomposition techniques (e.g. \cite{takriti2004robust}).  We, on the other hand, exploit the similarity of multistage robust problems with two-person zero-sum games and apply a game tree search to solve quantified integer programs (QIPs) \cite{Subramani_Integer,ederer2011quantified}. QIPs are integer linear programs with ordered variables that are either existentially or universally quantified, and provide a convenient framework for multistage robust optimization, allowing polyhedral or even decision-dependent uncertainty sets \cite{hartisch2019mastering}. The very intuitive approach of applying game tree search to solve the very compact QIP formulation paves the way for large multistage problems: a recent computational study showed that solving robust discrete problems with multiple stages is well within the reach of current computational prowess \cite{goerigk2021multistage}.

As in any tree search algorithm, a rapid but high-quality assessment of the potential of different subtrees is crucial for the search process. This can be done by relaxing some problem conditions in order to obtain a bound on the optimal value of a (sub)problem. 
In mixed integer linear programming (MIP), variants of the linear programming (LP)-relaxation of a problem are employed \cite{Balas2001}. Equivalently for QIPs, the quantified linear programming (QLP)-relaxation can be used. But its DEP's size remains exponentially large, even when tackled with decomposition techniques \cite{lorenz2015solving}. By further relaxing the variables' quantification the LP-relaxation of a QIP arises, which, however, completely neglects the problem's multistage and uncertain nature. In order to restore the robust nature of the problem, we exploit that a solution must cope with \textit{any} uncertain scenario: fixing (originally) universally quantified variables in this LP-relaxation yields a very powerful tool in our tree search algorithm. Furthermore, we show that if only a small subset of the uncertainty set is considered in the QLP-relaxation, the correseponding DEP remains small enough to yield an effective relaxation. 
This local approximation, which has similarites to sampling techniques \cite{GuptaA}, is utilized to eventually obtain the optimal solution for a multistage  robust optimization problem. 

For both enhanced relaxations the selection of incorporated scenarios crucially affects their effectiveness, i.e. having reasonable knowledge of which universal variable assignments are particularly vicious can massively boost the search process. We partially rely on existing heuristics, developed to analyze and find such promising assignments in a game tree search environment \cite{Killer,schaeffer1989history} as well as for solving SAT problems \cite{moskewicz2001chaff}. As these heuristic evaluations change over time, the relaxations adapt based on newly gathered information.

In Section \ref{Sec::QIP} we introduce the basics of quantified programming and outline the used game tree search. In Section \ref{Sec::Relax} we present the utilized relaxations and  we illustrate the strength of our approach in a computational study in Section \ref{Sec::Exp}, before we conclude in Section \ref{Sec::Concl}.

\section{Quantified Programming \label{Sec::QIP}}
In the following, we formally introduce quantified integer programming. \cite{DissMichael} can be consulted for a more detailed discussion.
\subsection{Basics of Quantified Integer Programming}

A QIP can be interpreted as a two-person zero-sum game between an \emph{existential player} setting the existentially quantified variables and a \emph{universal player} setting the universally quantified variables. The variables are set in consecutive order according to the variable sequence $x_1, \ldots, x_n$. For each variable $x_j$ its domain is given by
$\mathcal{L}_j =\{y\in \mathbb{Z} \mid    l_j \leq y \leq u_j \} \neq \emptyset$ and the domain of the entire variable vector is  $\mathcal{L} =\{\pmb{y} \in \mathbb{Z}^n \mid \forall j \in [n]: y_j \in \mathcal{L}_j\}$.  In the following, vectors are always written in bold font and the transpose sign for the scalar product between vectors is dropped for ease of notation. Let $\pmb{Q} \in \{\exists, \forall\}^n$ denote the vector of quantifiers.  We call each maximal consecutive subsequence in $\pmb{Q}$ consisting of identical quantifiers a \textit{block}. The quantifier corresponding to the $i$-th quantifier block is given by $Q^{(i)} \in \{\exists, \forall\}$. 
Let $\beta \in [n]$ denote the number of variable blocks.
With $\mathcal{L}^{(i)}$ we denote the corresponding domain of the $i$-th variable block as in $\mathcal{L}$.
At each move $\pmb{x}^{(i)}\in \mathcal{L}^{(i)}$, the corresponding player knows the settings of $\pmb{x}^{(1)}, \ldots, \pmb{x}^{(i-1)}$ before taking her decision. 
Each fixed vector $\pmb{x} \in \mathcal{L}$, that is, when the existential player has fixed the existential variables and the universal player has fixed the universal variables, is called \emph{a play}.  If $\pmb{x}$ satisfies the  \emph{existential constraint system}  $A^\exists \pmb{x} \leq \pmb{b}^\exists$, the existential player pays $\pmb{c} \pmb{x}$ to the universal player. If $\pmb{x}$ does not satisfy $A^\exists \pmb{x} \leq \pmb{b}^\exists$, we say \emph{the existential player loses} and the payoff is $+\infty$.
Therefore, it is the existential player's primary goal to ensure the fulfillment of the constraint system, while the universal player tries to violate some constraints. If the existential player is able to ensure that all constraints are fulfilled he tries to minimize $\pmb{c}\pmb{x}$, whereas the universal player tries to maximize her payoff. 

We consider QIPs with polyhedral uncertainty \cite{CG16,DissMichael} and therefore a \emph{universal constraint system} $A^\forall \pmb{x} \leq \pmb{b}^\forall$ is introduced, with $A^\forall_\exists=\pmb{0}$, \IE the submatrix of $A^\forall$ corresponding to existentially quantified variables is zero. Here the main goal of the universal player becomes satisfying this universal constraint system and therefore the universally quantified variables are restricted to a polytope. In particular,  a universal variable assignment must not make it impossible to satisfy the system $A^\forall \pmb{x} \leq \pmb{b}^\forall$. 
 Wit $A^\forall_\exists =\pmb{0}$ the system $A^\forall \pmb{x} \leq \pmb{b}^\forall$ restricts universal variables in such way that their range only depends on previous universal variables (cf. \cite{hartisch2019mastering}).

\begin{mydef}[QIP with Polyhedral Uncertainty]\label{Def::QIPPU}~\\
Let $\mathcal{L}$ and $\pmb{Q}$ be given with  $Q^{(1)}=Q^{(\beta)}=\exists$. Let $\pmb{c} \in \mathbb{Q}^n$ be the vector of objective coefficients, for which $\pmb{c}^{(i)}$ denotes the vector of coefficients belonging to block $i$.
The term $\pmb{Q} \circ \pmb{x} \in \D_All$ with the component-wise binding operator $\circ$ denotes the \emph{quantification sequence} $Q^{(1)}\pmb{x}^{(1)} \in {\D_All}^{(1)}\ Q^{(2)}\pmb{x}^{(2)}\in {\D_All}^{(2)}(\pmb{x}^{(1)})\ \ldots\ Q^{(\beta)} \pmb{x}^{(\beta)} \in {\D_All}^{(\beta)}(\pmb{x}^{(1)},\ldots,\pmb{x}^{(\beta-1)})$ such that every quantifier $Q^{(i)}$ binds the variables  $\pmb{x}^{(i)}$ of block $i$ ranging in their domain ${\D_All}^{(i)}(\pmb{x}^{(1)},\ldots,\pmb{x}^{(i-1)})$, with
${\D_All}^{(i)}(\tilde{\pmb{x}}^{(1)},\ldots,\tilde{\pmb{x}}^{(i-1)})= $ $$\begin{cases}
\mathcal{L}^{(i)} &\text{if } Q^{(i)} = \exists \\
\{\pmb{y} \in \mathcal{L}^{(i)} \mid \exists \pmb{x}=(\tilde{\pmb{x}}^{(1)},\ldots,\tilde{\pmb{x}}^{(i-1)},\pmb{y},{\pmb{x}}^{(i+1)},\ldots,{\pmb{x}}^{(\beta)}) \in \D_All \} 
&\text{if } Q^{(i)} = \forall  \, .
\end{cases}$$ 
\noindent
We call
\[
\resizebox{ \textwidth}{!} {$
\min\limits_{\pmb{x}^{(1)} \in  {\D_All}^{(1)}} \left(\pmb{c}^{(1)}\pmb{x}^{(1)}+ \max\limits_{\pmb{x}^{(2)} \in  {\D_All}^{(2)}} \left( \pmb{c}^{(2)}\pmb{x}^{(2)}  + \min\limits_{\pmb{x}^{(3)} \in  {\D_All}^{(3)}} \left(  \pmb{c}^{(3)}\pmb{x}^{(3)} + \ldots \min\limits_{\pmb{x}^{(\beta)} \in  {\D_All}^{(\beta)}}  \pmb{c}^{(\beta)}\pmb{x}^{(\beta)}\right) \right)\right)$}
\]
$$
 \text{s.t.}  \ \pmb{Q} \circ \pmb{x} \in \D_All  : A^\exists \pmb{x}\leq \pmb{b}^\exists
$$
a \emph{QIP with polyhedral uncertainty} given by the tuple $(A^\exists, A^\forall, \pmb{b}^\exists, \pmb{b}^\forall, \pmb{c}, \mathcal{L}, \pmb{Q})$.
\end{mydef}

We use $\mathcal{L}_\exists$ to describe the domain of the existentially quantified variables, given by their variables bounds as in $\mathcal{L}$. $\USet \neq \emptyset$ is the domain of universally quantified variables, \IE the uncertainty set, given by their domain and the universal constraint system. $\pmb{x_\exists}$ and $\pmb{x_\forall}$ denote the vectors only containing the existentially and universally quantified variables of game $\pmb{x}\in \D_All$, respectively. We call $\pmb{x_\forall} \in \USet$ a \textit{scenario} and refer to a partially filled universal variable vector as a \textit{subscenario}.
Additionally, we use $\mathcal{L}_{relax}$ to describe the domain given by $\mathcal{L}$ without the integrality condition. 
\subsection{Solving QIP via Game Tree Search }
A game tree can be used to represent the chronological order of all possible moves, given by the quantification sequence $\pmb{Q} \circ \pmb{x} \in \D_All$. The nodes in the game tree represent a partially assigned variable vector and  branches correspond to assignments of variables according to their variable domain. A solution of a QIP is a so-called winning (existential) strategy, that defines how to react to each legal move by the universal player, in order to ensure  \(A^\exists \pmb{x} \le \pmb{b}^\exists \). Hence, a solution is a subtree of the game tree with an exponential number of leaves with respect to the number of universal variables. If no such strategy exists the QIP is \textit{infeasible}.
If there is more than one solution, the objective function aims for a certain (the ``best'') one, whereat the value of a strategy is defined via the worst-case payoff at its leaves (see Stockman's Theorem \cite{Pijls}).
The play $\tilde{\pmb{x}}$ resulting in this leaf is called the principal variation \cite{Minimax}, which is the sequence of variable assignments being chosen during optimal play by both players.

The heart of the used search-based solver for 0/1-QIPs \cite{YasolACG17} is an arithmetic linear constraint database together with an alpha-beta algorithm, which has been successfully used in gaming programs, e.g. chess programs for many years \cite{KNUTH1975293,Hydra}.  The solver proceeds in two phases in order to find an optimal solution:
\begin{itemize}
\item feasibility phase: It is checked whether the instance has any solution. The solver acts like a quantified boolean formula (QBF) solver \cite{Cadoli2002,lonsing2010depqbf} with some extra abilities. Technically it performs a null window search \cite{Pearl80}.
\item optimization phase: The solution space is explored via alpha-beta algorithm in order to find the provable optimal solution. 
\end{itemize}
The alpha-beta algorithm  is enhanced by non-chronological backtracking and backward implication \cite{Qube,chen2001conflict}: 
when a contradiction is detected a reason in form of a clause is added to the constraint database and the search returns to the node where the found contradiction is no longer imminent. The solver deals with constraint learning on the so-called primal side as known from SAT- and QBF-solving (e.g. \cite{marques2009conflict,giunchiglia2002learning}), as well as with constraint learning on the dual side known from MIP (e.g. \cite{ceria1998cutting}). Several other techniques are implemented, e.g. restart strategies \cite{huang2007effect}, branching heuristics \cite{achterberg2005branching}, and pruning mechanisms \cite{hartisch2019novel}. 
Furthermore, relaxations are heavily used during the optimization phase: at every search node a relaxation is called in order to asses the quality of a branching decision, the satisfiability of the existential constraint system or for the generation of bounds.

\section{Enhanced Relaxations\label{Sec::Relax}}
\subsection{Relaxations for QIPs}

In case of a quantified program, besides relaxing the integrality of variables, the quantification sequence can be altered by changing the order or quantification of the variables. An LP-relaxation of a QIP can be built by dropping the integrality and also dropping universal quantification, \IE each variable is considered to be an existential variable with continuous domain. 
One major drawback of this LP-relaxation is that the worst-case perspective is lost by freeing the constraint system from having to be satisfied for any assignment of the  universally quantified variables: transferring the responsibility of universal variables to the existential player and solving the single-player game has nothing to do with the worst-case outcome in most cases. In order to strengthen this relaxation we use that for \textit{any} assignment of the universally quantified variables the constraint system must be fulfilled. Hence, fixing universally quantified variables according to some element of $\USet$ still yields a valid relaxation. This can be interpreted as knowing the  opponent moves beforehand and adapting one's own moves for this special play.

\begin{mydef}[LP-Relaxation with Fixed Scenario]\label{Def::LP_RELAX_FIXED}~\\
Let $P=(A^\exists, A^\forall, \pmb{b}^\exists, \pmb{b}^\forall, \pmb{c}, \mathcal{L}, \pmb{Q})$ and let $\pmb{\hat{x}}_\forall \in \USet$ be a fixed scenario. The LP $$\min \left\lbrace \pmb{c}\pmb{x} \mid \pmb{x}\in \mathcal{L}_{relax} \wedge \pmb{x_\forall} = \pmb{\hat{x}}_\forall \wedge A^\exists \pmb{x}\leq \pmb{b}^\exists \right\rbrace$$ is called the \emph{LP-relaxation with fixed scenario} $\pmb{\hat{x}}_\forall$ of $P$.
\end{mydef}

\begin{myprop}\label{PROP::EAS_RELAX}
Let $P=(A^\exists, A^\forall, \pmb{b}^\exists, \pmb{b}^\forall, \pmb{c}, \mathcal{L}, \pmb{Q})$ and let $R$ be the corresponding LP-relaxation with fixed scenario $\pmb{\hat{x}}_\forall  \in \USet$.  Then the following holds:
\begin{itemize}
\item[a)] If $R$ is infeasible, then also $P$ is infeasible.
\item[b)] If $R$ is feasible with optimal value $z_R$, then either $P$ is infeasible or $P$ is feasible with optimal value $z_P \geq z_R$, \IE $z_R$ constitutes a lower bound.
\end{itemize}
\end{myprop}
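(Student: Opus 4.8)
The plan is to show that the feasible region of the QIP, when projected appropriately, is contained in the feasible region of $R$, so that optimality over the smaller set dominates. The key observation is that a winning existential strategy for $P$ must in particular handle the specific scenario $\pmb{\hat{x}}_\forall \in \USet$. First I would argue the following: if $P$ has any solution (a winning existential strategy), then following that strategy against the universal play that realizes exactly $\pmb{\hat{x}}_\forall$ yields a complete play $\pmb{x}^\ast \in \D_All$ with $\pmb{x}^\ast_\forall = \pmb{\hat{x}}_\forall$ and $A^\exists \pmb{x}^\ast \le \pmb{b}^\exists$. Note that $\pmb{\hat{x}}_\forall \in \USet$ guarantees this universal play is legal (it respects $A^\forall \pmb{x} \le \pmb{b}^\forall$ and the domains), so the strategy is in fact required to respond to it. Moreover $\pmb{x}^\ast \in \mathcal{L} \subseteq \mathcal{L}_{relax}$, so $\pmb{x}^\ast$ is feasible for $R$. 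This immediately gives part~(a): if $R$ is infeasible, no such $\pmb{x}^\ast$ can exist, hence $P$ has no winning strategy and is infeasible.

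For part~(b), assume $R$ is feasible with optimal value $z_R$, and assume $P$ is feasible (otherwise there is nothing to prove). Let $\sigma$ be an optimal winning strategy for $P$ with value $z_P$, i.e. $z_P$ is the worst-case payoff $\pmb{c}\pmb{x}$ over the leaves of $\sigma$ (by Stockman's Theorem). Consider the particular leaf $\pmb{x}^\ast$ of $\sigma$ reached when the universal player plays so as to realize $\pmb{\hat{x}}_\forall$ block by block — again legal because $\pmb{\hat{x}}_\forall \in \USet$ means each prefix is within the corresponding ${\D_All}^{(i)}$. Then $\pmb{c}\pmb{x}^\ast \le z_P$ by the worst-case (maximum over universal choices) definition of the strategy value. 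On the other hand, $\pmb{x}^\ast$ is a feasible point of the minimization problem $R$ (it lies in $\mathcal{L}_{relax}$, satisfies $\pmb{x}_\forall = \pmb{\hat{x}}_\forall$, and satisfies $A^\exists \pmb{x} \le \pmb{b}^\exists$), so $z_R \le \pmb{c}\pmb{x}^\ast$. Chaining the two inequalities yields $z_R \le \pmb{c}\pmb{x}^\ast \le z_P$, which is the claim.

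The only delicate point is verifying that fixing the universal variables to $\pmb{\hat{x}}_\forall$ really corresponds to a legal line of play in the game tree, so that a winning strategy is genuinely obliged to cope with it; this is exactly where the hypothesis $\pmb{\hat{x}}_\forall \in \USet$ (as opposed to an arbitrary point of $\mathcal{L}_\forall$ ignoring $A^\forall$) is used, together with the structural property $A^\forall_\exists = \pmb{0}$, which ensures that the legality of a universal move depends only on previously set universal variables and hence that the full vector $\pmb{\hat{x}}_\forall \in \USet$ can be played out regardless of the existential player's responses. Everything else is a direct comparison of feasible regions and a single application of the worst-case characterization of strategy values, so no further estimates are needed.
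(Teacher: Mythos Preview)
Your proof is correct and follows essentially the same line as the paper's: both arguments observe that any winning strategy for $P$ must contain a play against the legal universal choice $\pmb{\hat{x}}_\forall$, and that this play is automatically a feasible point of $R$, yielding (a) by contrapositive and (b) by comparing $z_R \le \pmb{c}\pmb{x}^\ast \le z_P$ via Stockman's Theorem. Your explicit discussion of why $\pmb{\hat{x}}_\forall \in \USet$ together with $A^\forall_\exists = \pmb{0}$ guarantees the scenario can actually be played out block by block is a welcome clarification that the paper leaves implicit.
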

\begin{proof}~
\begin{itemize}
\item[a)]  If $R$ is infeasible then $$ \nexists \pmb{x}_\exists \in \mathcal{L}_\exists:\ A^\exists_\exists \pmb{x}_\exists \leq \pmb{b}^\exists-A^\exists_\forall \pmb{\hat{x}}_\forall\, ,$$
and since $\pmb{\hat{x}}_\forall \in \USet$ there cannot exist a winning strategy for $P$. As a gaming argument we can interpret this the following way: If there is some move sequence of the opponent we cannot react to in a victorious way---even if we know the sequence beforehand---the game is lost for sure.
\item[b)] Let $z_R=\pmb{c} \pmb{\hat{x}}$ be the optimal value of $R$, and let $\pmb{\hat{x}}_\exists$ be the corresponding fixation of the existential variables. It is \begin{equation}\label{Eq::ArgMinE}
\pmb{\hat{x}}_\exists = \argmin_{\pmb{x_\exists} \in \mathcal{L}_\exists}\left\lbrace \pmb{c_\exists} \pmb{x_\exists} \mid  A^\exists_\exists \pmb{x_\exists} \leq \pmb{b}^\exists - A^\exists_\forall \pmb{\hat{x}}_\forall\right\rbrace\, .
\end{equation} If $P$ is feasible, scenario  $\pmb{\hat{x}}_\forall$ must also be present in the corresponding winning strategy. Let $\pmb{\tilde{x}}$ be the corresponding play, \IE $\pmb{\tilde{x}}_\forall = \pmb{\hat{x}}_\forall$. With Equation \eqref{Eq::ArgMinE} obviously $z_R=\pmb{c} \pmb{\hat{x}}\leq \pmb{c}\pmb{\tilde{x}}$ and thus with Stockman's Theorem \cite{Pijls} $z_R \leq z_P$. 
\end{itemize}
\end{proof}

As we will show in Section \ref{Sec::Exp} adding a scenario to the LP-relaxation already massively speeds up the search process compared to the use of the standard LP-relaxation. However, partially incorporating the multistage nature into a relaxation should yield even better bounds. Therefore, we reintroduce the original order of the variables while only taking a subset of scenarios $S \subseteq \USet$ into account.

\begin{mydef}[$S$-Relaxation]\label{Def::SRelax}~\\
Given $P=(A^\exists, A^\forall, \pmb{b}^\exists, \pmb{b}^\forall, \pmb{c}, \mathcal{L}, \pmb{Q})$. Let $S \subseteq \USet$ and let $\mathcal{L}_S=\{\pmb{x} \in \mathcal{L}_{relax} \mid \pmb{x_\forall} \in S\}$.
We call

\[
\resizebox{ \textwidth}{!} {$
\min\limits_{\pmb{x}^{(1)} \in \mathcal{L}_S^{(1)}}\left( \pmb{c}^{(1)}\pmb{x}^{(1)}+ \max\limits_{\pmb{x}^{(2)} \in \mathcal{L}_S^{(2)}} \left( \pmb{c}^{(2)}\pmb{x}^{(2)} + \min\limits_{\pmb{x}^{(3)} \in \mathcal{L}_S^{(3)}} \left( \pmb{c}^{(3)}\pmb{x}^{(3)} + \ldots \min\limits_{\pmb{x}^{(\beta)} \in \mathcal{L}_S^{(\beta)}} \pmb{c}^{(\beta)} \pmb{x}^{(\beta)}\right)\right)\right) $}
\]
\begin{equation}
\textnormal{s.t.}\ Q \circ \pmb{x} \in \mathcal{L}_S:\ A^\exists \pmb{x} \leq \pmb{b}^\exists
\end{equation}	 
the \emph{$S$-relaxation} of $P$.
\end{mydef}

\begin{myprop}
Let  $P=(A^\exists, A^\forall, \pmb{b}^\exists, \pmb{b}^\forall, \pmb{c}, \mathcal{L}, \pmb{Q})$ be feasible and let $R$ be the $S$-relaxation with $\emptyset \neq S \subseteq \USet$ and optimal value $\tilde{z}_R$. Then $\tilde{z}_R$ is a lower bound on the optimal value $\tilde{z}_P$ of $P$, i.e. $\tilde{z}_R \leq \tilde{z}_P$.
\end{myprop}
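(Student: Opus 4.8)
The plan is to exhibit a feasible strategy for the $S$-relaxation whose value is at most $\tilde z_P$, and then to invoke the minimality of $\tilde z_R$ among all values of feasible strategies.

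First I would take an optimal winning strategy $\sigma$ for $P$; by Stockman's Theorem \cite{Pijls} its value equals $\tilde z_P$, namely the worst-case payoff $\pmb{c}\pmb{x}$ over the plays (leaves) contained in $\sigma$. Because $S \subseteq \USet$, every scenario $\pmb{\hat x}_\forall \in S$ is a legal sequence of universal moves in $P$, so $\sigma$ prescribes, block by block, an integral existential response against it; collecting these responses for all scenarios in $S$ yields a substrategy $\sigma_S$ of $\sigma$. I would then argue that $\sigma_S$ is a feasible strategy for the $S$-relaxation: its universal branchings range exactly over the block domains $\mathcal{L}_S^{(i)}$ (which, read analogously to $\D_All^{(i)}$ in Definition~\ref{Def::QIPPU}, only retain block assignments extendable to a full scenario of $S$), its existential assignments lie in $\mathcal{L}_{relax}$ since integral points trivially satisfy the relaxed variable bounds, and along every play of $\sigma_S$ the system $A^\exists \pmb{x} \le \pmb{b}^\exists$ holds because it already held along the corresponding play of $\sigma$. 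In particular this shows the $S$-relaxation is feasible, so $\tilde z_R$ is well defined.

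Next I would compare the two values. The leaves of $\sigma_S$ form a subset of the leaves of $\sigma$, namely exactly those plays $\pmb{x}$ with $\pmb{x_\forall} \in S$. Hence the worst-case payoff of $\sigma_S$ is a maximum taken over a smaller set of plays and is therefore at most the worst-case payoff of $\sigma$; applying Stockman's Theorem once more, this time inside the $S$-relaxation, the value of $\sigma_S$ is at most $\tilde z_P$. Since $\tilde z_R$ is the minimum value over all feasible strategies of the $S$-relaxation and $\sigma_S$ is one such strategy, we conclude $\tilde z_R \le \tilde z_P$. I expect the only delicate point to be the bookkeeping around the conditional block domains $\mathcal{L}_S^{(i)}$: one must check that restricting to scenarios of $S$ never leaves a universal node of the relaxation without a continuation and never forces an existential node to a value outside $\mathcal{L}_{relax}$. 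Both follow from $\emptyset \neq S \subseteq \USet$ together with the inclusion of integral points in $\mathcal{L}_{relax}$; everything else is the same ``fewer universal options, looser existential feasibility'' monotonicity argument that underlies Proposition~\ref{PROP::EAS_RELAX}.
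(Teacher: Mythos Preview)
Your proposal is correct and follows essentially the same gaming argument as the paper: restrict an optimal winning strategy for $P$ to the scenarios in $S$, observe that this yields a feasible strategy for the $S$-relaxation (integrality relaxed, universal moves restricted), and conclude via worst-case monotonicity over a smaller leaf set. The paper's own proof is a one-paragraph informal version of exactly this; you have simply made the mapping $\sigma \mapsto \sigma_S$ and the bookkeeping around $\mathcal{L}_S^{(i)}$ explicit.
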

\begin{proof}
Again we use a gaming argument: with $S \subseteq \USet$  the universal player is restricted to a subset of her moves in problem $R$, while the existential player is no longer restricted to use integer values. Furthermore, any strategy for $P$ can be mapped to a strategy for the restricted game $R$. Hence, the optimal strategy for $R$ is either part of a strategy for $P$ or it is an even better strategy, as the existential player does not have to cope with the entire variety of the universal player's moves. Therefore,  $\tilde{z}_R \leq \tilde{z}_P$.
\end{proof}

In general, $\USet$ has exponential size with respect to the number of universally quantified variables. Therefore, the main idea is to keep $S$ a rather small subset of $\USet$. This way the DEP of the $S$-relaxation---which is a standard LP--- remains easy to handle for standard LP solvers.
 
 \begin{myex}\label{Example::S_Relax}
Consider the following binary QIP (The min/max alternation in the objective is omitted for clarity):
$$\begin{array}{rrrrrl}
\min & -2x_1&+x_2 &-x_3& -x_4 \\
\textnormal{s.t.} &\exists\, x_1 & \forall\, x_2 &\exists\, x_3& \forall \, x_4 &\in \{0,1\}^4:\\
&x_1 & +x_2 &+x_3& +x_4 & \leq 3\\
& & -x_2 &-x_3& +x_4 & \leq 0
\end{array} $$
The optimal first-stage solution is $\tilde{x}_1=1$, the principal variation is $(1,1,0,0)$ and hence the optimal value is $-1$. Let $S=\{(1,0),(1,1)\}$ be a set of scenarios. The two LP-relaxations with fixed scenario accoring to the two scenarios in $S$ are shown in Table \ref{Tab::ExampleRelaxS}.
\begin{table}[h!]
\centering
\footnotesize
\caption{Solutions of the single LP-relaxations with fixed scenarios.}\label{Tab::ExampleRelaxS}
\begin{tabular}{p{2cm}p{4.5cm}p{4.5cm}}
\toprule
scenario &  $x_2=1$, $x_4=0$ &  $x_2=1$, $x_4=1$\\\midrule
relaxation&$\begin{array}{rrrrrl}
\min & -2x_1 &-x_3& +1\\
\textnormal{s.t.} &x_1 &+x_3 & \leq 2\\
&  &-x_3&   \leq 1
\end{array} $
&
$\begin{array}{rrrrrl}
\min & -2x_1 &-x_3& + 0\\
\textnormal{s.t.} &x_1 &+x_3 & \leq 1\\
&  &-x_3&   \leq 0
\end{array} $\\\addlinespace[.1cm]
solution & $x_1=1$, $x_3=1$ & $x_1=1$, $x_3=0$ \\\addlinespace[.1cm]
objective & -2&-2\\\bottomrule
\end{tabular}
\end{table}
Both yield the optimal first stage solution of setting $x_1$ to one. Now consider the DEP of the $S$-relaxation in which $x_{3(\tilde{x}_2)}$ represents the assignment of $x_3$ after $x_2$ is set to $\tilde{x}_2$:
$$\begin{matrix*}[l]
\min\  k \\
\left.\begin{matrix*}[r]\textnormal{s.t.}&-2x_1 &-x_{3(1)}& +1&\multicolumn{1}{l}{\leq k}\\
&x_1 &+x_{3(1)}& & \multicolumn{1}{l}{\leq 2}\\
&  &-x_{3(1)}& & \multicolumn{1}{l}{\leq 1}\\ \end{matrix*}\quad \right\rbrace \text{Scenario $(1,0)$}\\
\left.\begin{matrix*}[r]\color{white}\text{s.t.}&-2x_1 &-x_{3(1)}& +0 &\multicolumn{1}{l}{\leq k}\\
&x_1 &+x_{3(1)}& & \multicolumn{1}{l}{\leq 1}\\
&  &-x_{3(1)}& & \multicolumn{1}{l}{\leq 0}\end{matrix*}\quad \right\rbrace \text{Scenario $(1,1)$}\\
\left.\begin{matrix*}[r]\color{white}\text{s.t.}\color{black} &\color{white}-2\color{black} x_1&,\hspace{3.65pt} x_{3(1)}&\in [0,1]\end{matrix*}\right.
\end{matrix*} $$
In the $S$-relaxation it is ensured that variables following equal sub-scenarios are set to the same value. As $x_2$ is set to $1$ in each considered scenario in $S$, $x_3$ must be set to the same value in both cases. The solution of the DEP is $x_1=1$, $x_{3(1)}=0$ and $k=-1$. Thus, the $S$-relaxation yields the lower bound -1 for the original QIP. This is not only a better bound than the one obtained by the two LP-relaxations with individually fixed scenarios but it is also a tight bound.
\end{myex}

\subsection{Scenario Selection\label{Sec::ScenSel}}
Both for the LP-relaxation with fixed scenario as well as the $S$-relaxation the selection of scenarios is crucial. For the $S$-relaxation additionally the size of the scenario set $S$ affects its performance, in particular if too many scenarios are chosen, solving the relaxation might consume too much time. 
We use three heuristics to collect information on universal variables during the search:

\paragraph{VSIDS heuristic \cite{moskewicz2001chaff}.} Each variable in each polarity has a counter, initialized to 0. When a clause is added, due to a found conflict, the counter associated with each literal is incremented. Periodically, all counters are divided by a constant. 
\paragraph{Killer heuristic \cite{Killer}.}
When a conflict is found during the search the current assignment of universal variables---and thus the (sub)scenario leading to this conflict---is stored in the $\mathtt{killer}$ vector. This is a short-term information and is overwritten as soon as a new conflict is found.
\paragraph{Scenario frequency.}
For each scenario and subscenario the frequency of their occurrence during the search is stored.\\

The LP-relaxation with fixed scenario is implemented as follows: before calling the LP solver in a decision node, all variable bounds must be updated according to the current node anyway. When doing so (yet unassigned) universally quantified variables are set as in Algorithm \ref{Algo}.
Hence, the considered scenario is adapted in every decision node based on the latest heuristic information.

\begin{algorithm}[h!]
  \caption{Building a scenario}\label{Algo}
  \begin{algorithmic}[1]
  \For{each universal variable block $i \in \{1,\ldots,\beta \mid Q^{(\beta)}=\forall\}$  }{
\For{each unassigned variable $x_j$ in block $i$, in random order }{
\If{ $\mathtt{killer[j]}\neq \mathtt{undefined}$\label{Line:3}} 
    	 		 $\mathtt{Value} = \mathtt{killer[j]}$
    	 	\Else{\label{Line:4}}
    	 	$\mathtt{Value} = \argmax_{p\in\{0,1\}}{\mathtt{VSIDS}[j][p]}$
    	 \EndIf
    	 \If{ setting $x_j$ to $\mathtt{Value}$ is legal according to $\D_All^{(i)}$ } 
    	 		$x_j = \mathtt{Value}$
    	 	\Else {}
    	 	  $x_j = 1-\mathtt{Value}$
    	 \EndIf
  }
  \EndFor
  }
\EndFor
\end{algorithmic}
\end{algorithm}

The $S$-relaxation is adapted at each restart. The scenario set $S$ is rebuilt by considering the $\bar{S} \in \mathbb{N}$ most frequently used (sub)scenarios. Subscenarios are extended to a full scenario according to Algorithm \ref{Algo}. Even though starting with $\bar{S}$ (sub)scenarios, $S$ often contains fewer unique scenarios, as extending a subscenario may result in a scenario already contained in $S$.

Furthermore, our implementation merges the LP-relaxation with fixed scenario into the $S$-relaxation: the final relaxation takes all scenarios in the scenario set $S$, as well as one additional scenario that can be updated at each decision node into account. Hence, the used relaxation in fact reflects $|S|+1$ scenarios and in case of $S=\emptyset$ the LP-relaxation with fixed scenario remains. The DEP of this final relaxation is built and solved with an external LP solver.

The $S$-relaxation is currently only used while the search is in the very first variable block, \IE as soon as all variables of the first block are assigned, only the LP-relaxation with fixed scenario is used. The reason why this relaxation is no longer used in later variable blocks is that then universally quantified variables are already fixed according to the current search node. Hence, some scenarios in $S$ are no longer relevant as they refer to other parts of the search tree. Therefore, in order to use the $S$-relaxation in higher blocks it needs to be rebuilt each time a universal variable block is bypassed.

\section{Experiments\label{Sec::Exp}}
\subsection{Problem Descriptions}
We conduct experiments on three different QIPs with polyhedral uncertainty.  For a more detailed discussion on the problem formulations we refer to \cite{DissMichael}.

\paragraph{Multistage robust selection.} The goal is to select $p$ out of $n$ items with minimal costs. In an initial (existential) decision stage a set of items can be selected for fixed costs. Then, in a universal decision stage, one of $N \in \mathbb{N}$ cost scenario is disclosed. In the subsequent existential decision stage further items can be selected for the revealed costs. The latter two stages are repeated iteratively $T \in \mathbb{N}$ times. Hence, there are  $2T+1$ variable blocks.
\paragraph{Multistage robust assignment.} The goal is to find a perfect matching for a bipartite graph  $G = (V, E)$,  $V = A \cup B$, $n=|A| = |B|$, with minimal costs. In an initial (existential) decision stage a set of edges can be selected for fixed costs. Then, in a universal decision stage, one of $N \in \mathbb{N}$ cost scenario is disclosed. In the subsequent existential decision stage further edges can be selected for the revealed costs. Those two stages are repeated iteratively $T \in \mathbb{N}$ times. Both for the selection and the assignment problem, a universal constraint system is used to force the universally quantified variables to reveal exactly one scenario per period.
\paragraph{Multistage robust runway scheduling.} Each airplane $i \in A$ has to be assigned to exactly one time slot $j \in S$ and at most $b\in \mathbb{N}$ airplanes can be assigned to one time slot (as there are only $b$ runways). As soon as the (uncertain) time window in which the airplane can land is disclosed by universally quantified variables, the initial plan has to be adapted. The goal is to find an initial schedule that can be adapted according to the later disclosed time windows with optimal worst-case costs, as for each slot that the airplane is moved away from its originally planned time slot, a cost is incurred. The time window disclosure occurs in $T \in \mathbb{N}$ periods: the airplanes are partitioned into $T$ groups and after the initial schedule is fixed the time windows are disclosed for one group after the other. After each disclosure the schedule for the current group of airplanes has to be fixed right away, before knowing the time windows for the subsequent groups. The universal constraint system contains a single constraint, demanding that the disclosed time windows are comprised of $3$ time slots on average. 

\subsection{Computational Results}

The used solver 
 utilizes CPLEX (12.6.1) as its black-box LP solver to solve the relaxations and all experiments were run with AMD Ryzen 9 5900X processors.

First we provide details on the benefit of utilizing the LP-relaxation with fixed scenario as given in Definition \ref{Def::LP_RELAX_FIXED} compared to the standard LP at each decision node. Therefore, we consider the following testset:
\begin{itemize}
\item $350$ selection instances with $n=10$ items, $N=4$ scenarios per period and $T \in \{1,\ldots,7\}$ periods
\item $1350$ assignment instances with $n\in \{4,5,6\}$, $N \in \{2^1,2^2,2^3\}$ scenarios per period and $T \in \{1,\ldots,3\}$ periods
\item $270$ runway scheduling instances with $A \in \{4,5,6\}$ planes, $b=3$ runways, $S\in \{5,\ldots,10\}$ time slot and $T \in \{1,\ldots,3\}$ periods
\end{itemize}
In Table \ref{Tab::RuntimeOnlyFixLP}, as one of our major results, the overall runtimes when using the basic LP-relaxation and the LP-relaxation with fixed scenario are display. 
\begin{table}
\centering
\caption{Overall runtime (in seconds) when only using the standard LP-relaxation vs. the LP-relaxation with fixed scenario. \label{Tab::RuntimeOnlyFixLP}}
\begin{tabular}{lrrrrrr}
\toprule
used relaxation &~\hspace{.1cm}~& selection &~\hspace{.1cm}~& assignment &~\hspace{.1cm}~& runway\\\midrule
LP&&		\numprint{29501}	&&	\numprint{7152}	&&	\numprint{12902}\\
LP with fixed scenario &&	\numprint{348}	&&	\numprint{837}	&&	\numprint{4520}	\\\bottomrule
\end{tabular}
\end{table}
In each case, explicitly setting the universally quantified to a fixed scenario results in a massive speedup that is most impressive for the selection instances. This emphasizes, that partially incorporating the worst-case nature of the underlying problem into the basic LP-relaxation is clearly beneficial and does not have any negative side effects: the bounds of the variables in the LP-relaxation have to be updated at each search node  anyway and fixing the universally quantified variables even decreases the number of free variables in the resulting LP. 

We now investigate how the use of the more sophisticated $S$-relaxation in the first variable block changes the solver's behavior. Therefore, the scenario set $S$ is built from $\bar{S}=2^i$ (sub)scenarios, with $i \in \{0,\ldots,6\}$. In case of $\bar{S}=0$ only the LP-relaxation with fixed scenario is utilized in the first variable block. The used testset consists of the following instances

\begin{itemize}
\item $1050$ selection instances with $n\in \{10,20,30\}$ items, $N=4$ scenarios per period and $T \in \{1,\ldots,7\}$ periods
\item $450$ assignment instances with $n\in \{7\}$, $N \in \{2^1,2^2,2^3\}$ scenarios per period and $T \in \{1,\ldots,3\}$ periods
\item $360$ runway scheduling instances with $A \in \{4,5,6,7\}$ planes, $b=3$ runways, $S\in \{5,\ldots,10\}$ time slot and $T \in \{1,\ldots,3\}$ periods
\end{itemize}
As one indicator we consider the number of decision nodes visited during the optimization phase of the search. We denote $N(i,\bar{S})$  the number of visited decision nodes when solving instance $i$ with $\bar{S}$ scenarios used to build the corresponding $S$-relaxation. We compare each run with $\bar{S}=2^i$ to the basic run with $\bar{S}=0$ by considering the relative difference $D_r(i)=\frac{N(i,\bar{S})-N(i,0)}{\max(N(i,\bar{S}),N(i,0))}$. If $N(i,\bar{S})-N(i,0)<0$, i.e. if fewer decision nodes were visited while using the $S$-relaxation, $D_r(i)$ is negative, with its absolute value indicating the percentage savings. Similarly, if $N(i,\bar{S})-N(i,0)>0$, $D_r(i)$ is positive. The data on all instances is cumulated in Figure \ref{fig:BoxPlots} showing the corresponding box plots\footnote{Box plots are created using the macro psboxplot of the \LaTeX~package pst-plot15. The interquantile
range factor, defining the area of outliers, is set to 1.5 by default.}. 
\begin{figure}
     \centering
     \begin{subfigure}[b]{0.3\textwidth}
         \centering
         \includegraphics[width=\textwidth]{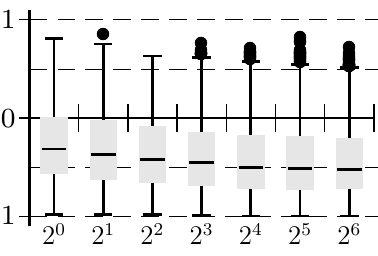}
         \caption{Selection}
         \label{fig:y equals x}
     \end{subfigure}
     \hfill
     \begin{subfigure}[b]{0.3\textwidth}
         \centering
         \includegraphics[width=\textwidth]{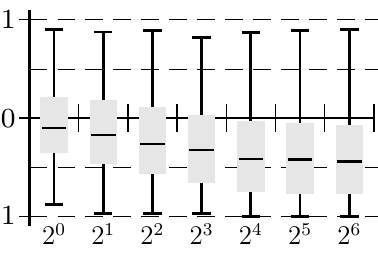}
         \caption{Assignment}
         \label{fig:three sin x}
     \end{subfigure}
     \hfill
     \begin{subfigure}[b]{0.3\textwidth}
         \centering
          \includegraphics[width=\textwidth]{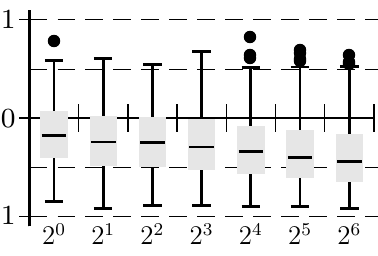}
         \caption{Runway}
         \label{fig:five over x}
     \end{subfigure}
        \caption{Boxplots of the $D_r$ values for all three testset and $\bar{S}\in \{2^0,\ldots, 2^6\}$}
        \label{fig:BoxPlots}
\end{figure}
It can be seen that the median of the relative difference values is always negative and  tends to decrease when more scenarios are considered in $S$, i.e. the larger the scenario set the fewer decision nodes have to be visited during the search. Note that compared to the box plots for the selection and runway  instances, for the assignment instances the upper whisker remains at a rather high level. But does a decreasing number of visited decision nodes also result in a lower runtime? For now consider the columns of Table~\ref{Tab::RuntimeSRelax} representing the heuristic scenario selection as presented in Section \ref{Sec::ScenSel}.
\begin{table}
\centering
\caption{Overall runtime (in seconds) when using the $S$-relaxation with heuristic and random scenario selection \label{Tab::RuntimeSRelax}}
\begin{tabular}{p{.5cm}R{1.44cm}R{1.44cm}p{.4cm}R{1.44cm}R{1.44cm}p{.4cm}R{1.44cm}R{1.44cm}}
\toprule
&\multicolumn{2}{r}{selection instances}&&\multicolumn{2}{r}{assignment instances}&
&\multicolumn{2}{r}{runway instances}\\
$\bar{S}$&heuristic & random &&heuristic & random &&heuristic & random\\\midrule 
$0$		& \numprint{12561}	 & \numprint{53348}	&& \numprint{2091}	 	& \numprint{1853}		&& \numprint{33335}		&	\numprint{32401}	\\ 
$2^0$	& \numprint{11324}	 	& \numprint{35316}	&& \numprint{2111}		& \numprint{1865}		&& \numprint{29313}		& \numprint{30418}\\ 
$2^1$	& \numprint{9900} 		& \numprint{30970}	&& \numprint{2022}		& \numprint{2046}		&& \numprint{25876}		&	\numprint{26412}	\\ 
$2^2$	&	\numprint{9700} 	& \numprint{31158}	&& \numprint{2210}		& \numprint{2232}		&& \numprint{25876}		&	\numprint{26101}	\\ 
$2^3$	&	\numprint{9394} 	& \numprint{29087}	&& \numprint{2220}		& \numprint{2708}		&& \numprint{23915}	 	&	\numprint{24795}	\\ 
$2^4$	& 	\numprint{9030}		& \numprint{27503}	&& \numprint{2931}		& \numprint{3718}		&& \numprint{23958}		&	\numprint{24860}	\\ 
$2^5$	& \numprint{8843}		& \numprint{26857}	&& \numprint{4223}		& \numprint{7300}		&& \numprint{21788}		&	\numprint{26777}	\\ 
$2^6$	&	\numprint{9149}	& \numprint{26590}	&& \numprint{8632}		& \numprint{17400}		&& \numprint{23073} 	&\numprint{30292}	\\ 
\bottomrule
\end{tabular}
\end{table}
Both for the selection and the runway scheduling problem the overall runtimes tend to decrease when $\bar{S}$ increases. Compared to only using the LP-relaxation with fixed scenario ($\bar{S}=0$), the runtimes decreased up to about  30\% and 35\% for the selection and runway scheduling instances, respectively. The slightly increased runtime for $\bar{S}=64$ indicates that the solution time of such a large relaxation can no longer be compensated by fewer visited decision nodes.  For the assignment instances, however, the overall runtime increases, up to a factor of four times the runtime when solely using the LP-relaxation with fixed scenario. Hence, even though fewer nodes are visited, the time it takes to process and generate information at these nodes increases considerably for this type of problem. 

In Table  \ref{Tab::RuntimeSRelax} we additionally provide information on how  well our scenario building routine performs on the considered testset. Therefore, instead of extending the $\bar{S}$ most frequently visited (sub)scenarios via Algorithm \ref{Algo}, the scenario set $S$ now contains $\bar{S}$ random scenarios.  Similiarly, for the LP-relaxation with fixed scenario, we replace the heuristic $\mathtt{Value}$ selection in lines \ref{Line:3} and \ref{Line:4} of Algorithm \ref{Algo} by randomly assigning the value $0$ or $1$. Note, however, that even though the killer and VSIDS information is neglected while building the relaxation, it is still utilized in other situations during the search.  The overall runtimes are shown in the according columns of Table  \ref{Tab::RuntimeSRelax}. For the selection problem, randomly selecting the scenario results in a runtime about three times longer compared to using the heuristic selection process. For the runway instances, our heuristic also slightly outperforms the use of random scenarios. For the assignment instances the random scenario selection tends to be more favorable when only few scenarios are involved.

\section{Conclusion and Outlook\label{Sec::Concl}}
We investigated how adaptive relaxations influence our search-based solution algorithm for multistage robust optimization problems.
Our experimental results show that incorporating a single scenario in the standard LP-relaxation significantly speeds up the search process and clearly dominates the basic LP-relaxation. Furthermore, the use of the $S$-relaxation which incorporates a subset of scenarios in a slim DEP, considerably decreases the number of visited nodes, even if only utilized in the very first variable block. While this smaller search space also resulted in a faster solution time for multistage selection and runway scheduling problems, the solution time tended to increase for multistage assignment instances. Additionally, we showed that our scenario selection heuristic outperforms a random scenario selection. 

Several research questions arise from the presented experimental results. Is it possible to improve the heuristic scenario selection? Currently our heuristic focuses on including seemingly harmful scenarios but does not consider the diversity of the scenario set $S$, which might be one reason why using random scenarios already works quite well on specific problems. In contrast to our currently implemented search-information-based scenario selection heuristic, we find it interesting to deploy AI methods in order to classify scenarios as relevant and irrelevant for general QIP. Additionally, relevant characteristics of instances have to be found, in order to dynamically adjust  the size of the used scenario set $S$. Furthermore, deploying the $S$-relaxation in all---not only the very first---variable blocks is a very promising yet challenging task, as the implementation of such a frequently modified $S$-relaxation must be done carefully. In this case having the ability to update all considered scenarios in each decision node is also of interest, in particular as our results showed that having few scenarios in the relaxation is already very beneficial.

%
%
 \bibliographystyle{splncs04}
 \bibliography{mybibliography}

\end{document}